\begin{document}

\title{A Two-Phase Quasi-Newton Method for Optimization Problem
}


\author{Suvra Kanti Chakraborty         \and
        Geetanjali Panda 
}


\institute{S. K. Chakraborty \at
               Department of Mathematics, Indian Institute of Technology Kharagpur, India, 721302.\\
              \email{suvrakanti@maths.iitkgp.ernet.in}           
           \and
           G. Panda \at
               Department of Mathematics, Indian Institute of Technology Kharagpur, India, 721302.
}

\date{Received: date / Accepted: date}

\maketitle

\begin{abstract}
In this paper, a two-phase quasi-Newton scheme is proposed for solving an unconstrained optimization problem. The global convergence property of the scheme is provided under mild assumptions. The super linear rate of the scheme is also proved in the vicinity of the solution. The advantages of the proposed scheme over the traditional scheme are justified with numerical table and graphical illustrations.
\keywords{quasi-Newton scheme \and Global convergence \and Super linear convergence}
 \subclass{90C53 \and 90C30 }
\end{abstract}

\section{Introduction}
The basic idea behind the quasi-Newton methods for unconstrained optimization problem lies on updating the hessian approximation in a computationally cheap way, that ensures the secant condition. DFP and BFGS methods are popular quasi-Newton schemes. DFP method is sensitive to inaccurate line searches, where as BFGS scheme is widely used for its robustness and self correcting properties. quasi-Newton methods have super linear convergence property under exact or inexact line searches. One may see ( \cite{biglari2015limited}, \cite{byrd2016stochastic},  \cite{chen2014global}, \cite{jensen2016approach}, \cite{jie2007quasi}, \cite{tapia2015averaging},\cite{xiao2013global}, \cite{yuan2013bfgs})
for the recent advances in this direction.
Consider a general minimization problem
\begin{equation}
(P)~~Min_{x\in \mathbb{R}^{n}} ~ f(x), \nonumber
\end{equation}
where $f : R^n \rightarrow R$ is a twice continuously differentiable function. Solution of (P) satisfies the system of equations $\nabla f(x)=0$. Several advanced iterative processes exist so far to solve a system of nonlinear equations $F(x),~ F: \mathbb{R}^{n} \rightarrow \mathbb{R}^m$ (\cite{Lukšan2017}, \cite{sharma2013efficient}, \cite{soleymani2014multi}). The two-phase iterative scheme for the same, proposed by  (\cite{babajee2006analysis})
takes the following form.
$$x_{k+1}= x_{k}-[\frac{1}{2} (J(x_{k}) + J(z_{k}))]^{-1} F(x_{k}),$$
$$z_{k}=x_{k}- J(x_{k})^{-1} F(x_{k}),$$
where $J(x_{k})$ is the Jacobian of $F(x)$, at the $k$-th iteration of the iterative scheme. These two-phase schemes have been further studied in unconstrained and constrained optimization problems(\cite{chakraborty2014higher}, \cite{chakraborty2016Two}).
\\\\
In this paper, we propose a variant of quasi-Newton scheme for unconstrained optimization problem, which involves two phases and based on the basic idea of above iterative process. The new scheme  guarantees the super linear global convergence behavior. In numerical illustrations, it is observed that the number of iterations of the proposed scheme are significantly less than the standard BFGS scheme in comparable execution time.\\\\
This concept is organized in following sections.
Two-phase quasi-Newton scheme is proposed in Section \ref{proposed_scheme}, convergence analysis of the scheme is discussed in Section \ref{scheme_convergence}, followed by Numerical illustrations in Section \ref{numerical}.
\section{Proposing two-phase quasi-Newton scheme} \label{proposed_scheme}
In a general line search method for $(P)$, a descent sequence $\{x_{k}\}$ is generated as $x_{k+1}=x_{k}+\alpha_{k}p_{k},$ where $p_{k}$ is the descent direction at $x_k$ and the positive scalar $\alpha_{k}$ is the step length at $x_k$ along the direction $p_{k}$. The condition $p_{k}^ {T} \nabla f_{k}<0$ guarantees that the function $f$ can be reduced along the direction $p_{k}$. The search direction may take the form $p_{k}=-B_{k}^{-1} \nabla f_{k}$, where $B_{k}$ is a symmetric positive definite matrix of order $n \times n$. The most popular quasi-Newton (BFGS) scheme generates a sequence of positive definite  hessian approximation $B_k$ at the current iterate $x_k$ as
\begin{equation}
B_{k+1}=B_{k}-\frac{B_{k} s_{k} s_{k}^{T} B_{k}}{s_{k}^{T}B_{k}s_{k}}+\frac{ y_{k}y_{k}^{T} }{y_{k}^{T}s_{k}}, \nonumber
\end{equation}
where
$s_{k}=x_{k+1}-x_{k},~~y_{k}= \nabla f_{k+1} - \nabla {f_{k}},~~ x_{k+1}=x_{k}-\alpha_{k} B_{k}^{-1} \nabla f_{k},~f_{k}=f(x_{k})$ and $\alpha_{k}$ is computed from a line search procedure. The sequence $\{x_k\}$ shows global super linear convergence behavior under some standard assumptions.
The proposed modified scheme is as follows. \\
Consider a positive definite matrix $B_0$ at initial stage $k=0$ and
\begin{equation}
B_{\mathbf{\bar{k}}}=B_{k}-\frac{B_{k} s_{k} s_{k}^{T} B_{k}}{s_{k}^{T}B_{k}s_{k}}+\frac{ y_{k}y_{k}^{T} }{y_{k}^{T}s_{k}}. \nonumber
\end{equation}
 $B_{k}$ is upgraded as
$$
B_{k+1}=\lambda B_{k}+ (1- \lambda) B_{\mathbf{\bar{k}}}, ~\mbox{for}~ \lambda \in (0,1),
$$
where $
s_{k}=x_{\mathbf{\bar{k}}}-x_{k},~~
 y_{k}= \nabla f_{\mathbf{\bar{k}}} - \nabla {f_{k}},
 ~~~~
 x_{\bar{k}}=x_{k}- \alpha_{\mathbf{\bar{k}}} B_{k}^{-1} \nabla f_{k}.
 $
 Upgrade $x_k$ as
\begin{equation} \label{sch}
x_{k+1}=x_{k}-\alpha_{k} B_{k+1}^{-1} \nabla f_{k},
\end{equation}
where $\alpha_{\mathbf{\bar{k}}}$ and $\alpha_{k}$ are computed by inexact Wolfe search.\\ \\

In this proposed method we modify the search direction in two phases as follows. The step length $\alpha_{k}$ and $\alpha_{\mathbf{\bar{k}}}$ are chosen by Wolfe search method.
\begin{equation}\label{AAAA}
x_{\mathbf{\bar{k}}}=x_{k}+\alpha_{\mathbf{\bar{k}}} p_{\mathbf{\bar{k}}}, ~\mbox{where} ~~p_{\mathbf{\bar{k}}}=-B_{k}^{-1}\nabla f_{k},
\end{equation}
and
\begin{equation} \label{BBBB}
x_{k+1}=x_{k}+ \alpha_{k} p_{k},
~\mbox{where}~~
p_{k}=-B_{k+1}^{-1} \nabla f_{k}.
 \end{equation}
If $B_{k}$ is positive definite then this new matrix $B_{k+1}$ is positive definite being the arithmetic mean of $B_{k}$ and the standard BFGS updation of it. $B_{k+1} $ is used at two consecutive stages:  at the $k^{th}$ stage to determine $x_{k+1}$, as well as at $k$+1$^{th}$ stage to determine $x_{\mathbf{\overline{k+1}}}$. In the vicinity of the solution of $(P)$, the parameters $\alpha_{k}$ and $\alpha_{\mathbf{\bar{k}}}$  are chosen as unit length for $k$ greater than sufficiently large $k_{0} \in \mathbb{N}$. This fact will be used in next section to prove the convergence of the scheme.
\section{Convergence of the proposed scheme} \label{scheme_convergence}
\begin{theorem}\label{convergence_criterion}
Suppose $f : \mathbb{R}^{n} \rightarrow \mathbb{R}$ is twice continuously differentiable and $x_{k}$ is updated by (\ref{AAAA}) and (\ref{BBBB}). Assume that the sequence $\{x_{k}\}$ generated by (\ref{sch}) satisfies $\{x_{k}\} \rightarrow x^{*}$, $\nabla f(x^{*})=0$ and $\nabla ^{2} f(x^{*})>0$. Then the sequence $\{x_{k}\}$ converges to $x^{*}$ super linearly if
\begin{equation} \label{condition}
\lim_{k \rightarrow \infty} \frac{\| (B_{k} - \nabla^{2} f^{*})p_{\mathbf{\bar{k}}} \| }{ \|  p_{\mathbf{\bar{k}}} \| }=0.
\end{equation}
\end{theorem}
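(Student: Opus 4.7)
The plan is to apply the classical Dennis--Moré super linear-convergence argument, but routed through the matrix $B_{k+1}$ that actually drives the outer iterate (\ref{sch}), by using the two-phase update rule to transport the hypothesis (\ref{condition}) from $B_k$ onto $B_{k+1}$.

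First I would establish two preparatory facts, using that $\alpha_{\bar{k}} = \alpha_k = 1$ for $k$ sufficiently large (as noted in Section \ref{proposed_scheme}) and that $\nabla^2 f^*$ is invertible. Taylor expansion at $x^*$ combined with the identity $B_k p_{\bar{k}} = -\nabla f_k$ from (\ref{AAAA}) and with (\ref{condition}) gives $\nabla^2 f^*(p_{\bar{k}} + (x_k - x^*)) = o(\|p_{\bar{k}}\|) + o(\|x_k-x^*\|)$, which after absorbing the $o(\|p_{\bar{k}}\|)$ term by a short triangle-inequality argument yields $p_{\bar{k}} = -(x_k - x^*) + o(\|x_k-x^*\|)$; in particular $\|p_{\bar{k}}\|$ and $\|x_k - x^*\|$ are of the same asymptotic order. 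Separately, the BFGS secant equation $B_{\bar{k}} p_{\bar{k}} = y_k$ together with the mean-value expansion $y_k = \nabla^2 f^*\, p_{\bar{k}} + o(\|p_{\bar{k}}\|)$, valid by continuity of $\nabla^2 f$, gives $(B_{\bar{k}} - \nabla^2 f^*)p_{\bar{k}} = o(\|p_{\bar{k}}\|)$, i.e.\ the standard Dennis--Moré bound for the BFGS component along its own secant direction.

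Second, I would combine these on the convex average $B_{k+1} = \lambda B_k + (1-\lambda)B_{\bar{k}}$. The triangle inequality together with (\ref{condition}) and the BFGS bound just derived yields $\|(B_{k+1} - \nabla^2 f^*)p_{\bar{k}}\| = o(\|p_{\bar{k}}\|)$, and converting $p_{\bar{k}}$ to $x_k - x^*$ via the first step produces the key estimate
$$\|(B_{k+1} - \nabla^2 f^*)(x_k - x^*)\| = o(\|x_k - x^*\|).$$

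Finally, the classical identity $x_{k+1} - x^* = -B_{k+1}^{-1}\bigl[\nabla f_k - B_{k+1}(x_k - x^*)\bigr]$, with the bracket expanded by Taylor and bounded by the previous display, yields $\|x_{k+1}-x^*\| = o(\|x_k-x^*\|)$, provided $\|B_{k+1}^{-1}\|$ stays uniformly bounded. The main obstacle is precisely this last requirement: one must rule out that the convex averaging allows the smallest eigenvalue of $B_{k+1}$ to drift toward zero. I would handle it via the hereditary positive-definiteness of the BFGS update (secured by $y_k^{T}s_k > 0$ near $x^*$, which follows from $\nabla^2 f^* \succ 0$) together with its preservation under convex combination with $B_k$, supplemented by a standard Powell-type trace/determinant argument to pin down the uniform bound on $\|B_{k+1}^{-1}\|$.
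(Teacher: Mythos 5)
Your route is genuinely different from the paper's, and its preparatory steps are sound: the estimate $p_{\mathbf{\bar{k}}}=-(x_{k}-x^{*})+o(\|x_{k}-x^{*}\|)$ does follow from (\ref{condition}), $B_{k}p_{\mathbf{\bar{k}}}=-\nabla f_{k}$ and the invertibility of $\nabla^{2}f^{*}$; and with unit first-phase steps the secant relation $B_{\mathbf{\bar{k}}}p_{\mathbf{\bar{k}}}=y_{k}=\bar{G}_{k}p_{\mathbf{\bar{k}}}$ plus continuity of $\nabla^{2}f$ gives $(B_{\mathbf{\bar{k}}}-\nabla^{2}f^{*})p_{\mathbf{\bar{k}}}=o(\|p_{\mathbf{\bar{k}}}\|)$, hence by convexity $(B_{k+1}-\nabla^{2}f^{*})p_{\mathbf{\bar{k}}}=o(\|p_{\mathbf{\bar{k}}}\|)$. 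The paper instead never transports (\ref{condition}) onto $B_{k+1}$: it compares $p_{\mathbf{\bar{k}}}$ with the Newton direction via $p_{\mathbf{\bar{k}}}-p_{k}^{N}=\nabla^{2}f_{k}^{-1}(\nabla^{2}f_{k}-B_{k})p_{\mathbf{\bar{k}}}$, so the only inverse whose norm it needs is that of the true Hessian (bounded near $x^{*}$), and it treats the discrepancy between the actual step $p_{k}=-B_{k+1}^{-1}\nabla f_{k}$ and $p_{\mathbf{\bar{k}}}$ using only the assumed convergence of the iterates, as in (\ref{PPP}).

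The genuine gap in your plan is exactly the boundedness you defer to the end. Theorem \ref{convergence_criterion} assumes only $x_{k}\rightarrow x^{*}$, $\nabla^{2}f^{*}\succ 0$ and (\ref{condition}), and (\ref{condition}) constrains $B_{k}$ along the single direction $p_{\mathbf{\bar{k}}}$ only; from this, neither $\sup_{k}\|B_{k+1}\|$ (which you use tacitly when converting $(B_{k+1}-\nabla^{2}f^{*})p_{\mathbf{\bar{k}}}=o(\|p_{\mathbf{\bar{k}}}\|)$ into $(B_{k+1}-\nabla^{2}f^{*})(x_{k}-x^{*})=o(\|x_{k}-x^{*}\|)$, since the $o(\|x_{k}-x^{*}\|)$ remainder gets multiplied by $B_{k+1}-\nabla^{2}f^{*}$) nor $\sup_{k}\|B_{k+1}^{-1}\|$ can be extracted. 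Hereditary positive definiteness of the BFGS update, preserved under the convex average, does not prevent the smallest eigenvalue of $B_{k+1}$ from drifting to zero; and the Powell/Byrd--Nocedal trace--determinant machinery yields only $\mathrm{trace}(B_{k+1})\leq \mathrm{trace}(B_{0})+c\,(k+1)$ together with ``a fixed fraction of iterations are good'' conclusions from $\psi(B_{k})>0$ --- it never delivers uniform two-sided eigenvalue bounds valid at every iteration, which is what your final inequality $\|x_{k+1}-x^{*}\|\leq\|B_{k+1}^{-1}\|\,o(\|x_{k}-x^{*}\|)$ requires for all large $k$. Uniform control of that kind is obtained in the classical theory only under additional hypotheses (Lipschitz Hessian and summable errors, as in the paper's Theorem \ref{Theorem_super}), which are not available here. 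So as written the last step is unjustified; you would need either to import such extra assumptions or to restructure the conclusion, as the paper does, so that no norm of $B_{k+1}^{-1}$ (or of $B_{k+1}$) ever enters the estimate.
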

\begin{proof}
Denote $f(x^{*})=f^{*}$. Newton direction at $x_k$ is $p_k^N=-\nabla^{2} f_{k}^{-1} \nabla f_{k}$. Then from ( \ref{AAAA}),
\begin{align}
p_{\mathbf{\bar{k}}}-p_{k}^{N} &= \nabla^{2} f_{k}^{-1} \big( \nabla^{2} f_{k} p_{\mathbf{\bar{k}}}+ \nabla f_{k} \big) =
 \nabla^{2} f_{k}^{-1}  (\nabla^{2} f_{k}- B_{k}) p_{\mathbf{\bar{k}}}. \label{ZZZ}
\end{align}
 Since the limiting hessian $\nabla^{2} f^{*}$ is positive definite, so $\| \nabla^{2} f_{k}^{-1} \| $ is bounded above for $x_{k}$ sufficiently close to $x^{*}$, and hence from (\ref{ZZZ}),
\begin{align}
\| p_{\mathbf{\bar{k}}}-p_{k}^{N} \| & \leq M~ ( \| (\nabla^{2} f_{k}- B_{k}) p_{\mathbf{\bar{k}}} \| )~~~\mbox{for some positive real number M}   \nonumber \\
& \leq M~ \big( \| (B_{k}-\nabla^{2} f^{*} ) p_{\mathbf{\bar{k}}} \| + \| (\nabla^{2} f_{k}-\nabla^{2} f^{*} )\| \| p_{\mathbf{\bar{k}}} \| \big). \nonumber
 \end{align}
Dividing both side by $\| p_{\mathbf{\bar{k}}} \|$, letting  $k \rightarrow \infty $, $x_{k} \rightarrow x^{*}$ and using (\ref{condition}), we have
 \begin{align}\label{XXXX}
&\frac{\| p_{\mathbf{\bar{k}}}-p_{k}^{N} \|}{ \| p_{\mathbf{\bar{k}}} \|} \rightarrow 0.
\end{align}
Note that for sufficiently large $k\in \mathbb{N}$, $\| p_{k} - p_{\mathbf{\bar{k}}} \| \rightarrow 0$, since
\begin{align} \label{PPP}
\| p_{k} - p_{\mathbf{\bar{k}}} \| = \| x_{k+1} - x_{\mathbf{\bar{k}}} \| \leq \| x_{k+1} - x^{*} \| + \| x_{\mathbf{\bar{k}}} - x^{*} \| \leq \frac{\epsilon}{2} + \frac{\epsilon}{2} = \epsilon,
\end{align}
where $\epsilon$ is arbitrarily small real number.
 For some $N_{2} \in \mathbb{N}$,
$$\| p_{\mathbf{\bar{k}}} \|=\| x_{\bar{k}}-x_{k} \| \leq \| x_{k} -x^{*} \| + \| x_{\bar{k}} -x^{*} \| \leq 2\| x_{k} - x^{*} \|~ \mbox{for all}~ k>N_{2}, $$
which implies
$
\| p_{\mathbf{\bar{k}}} \| =   \mbox{O} ( \| x_{k} - x^{*} \| ).
$
Using (\ref{XXXX}) and (\ref{PPP}), we have
 \begin{align*}\label{proof}
\| x_{k}+p_{k}-x^{*} \| & \leq \| x_{k} + p_{k}^{N}-x^{*} \| + \| p_{\mathbf{\bar{k}}} - p_{k}^{N}\| + \|p_{k} -p_{\mathbf{\bar{k}}}\| \nonumber \\
&\leq \mbox{O} (\| x_{k}-x^{*} \| ^{2}) +  \mbox{o}(\| p_{\mathbf{\bar{k}}} \|)
 \leq \mbox{o} ( \| x_{k} -x^{*} \|)
\end{align*}
%
This shows super linear convergence behavior of the sequence $\{x_k\}$.
\end{proof}
\subsection{Convergence analysis} \label{Convergence}
By Taylor series expansion, $ \nabla f_{\mathbf{\bar{k}}} = \nabla {f_{k}} + \int_{0}^{1} \nabla^{2} f(x+ts_{k})s_{k}~ dt $. Denote $\bar{G}_{k}$ as the average hessian $\int_{0}^{1} \nabla^{2} f(x+ts_{k})~ dt $ . Then
\begin{equation} \label{yGbar}
y_{k}= \nabla f_{\mathbf{\bar{k}}} - \nabla {f_{k}}=\bar{G}_{k}s_{k}.
\end{equation}
\begin{theorem} The sequence $\{ x_k\}$ generated by (\ref{sch}) possesses global convergence property under the following assumptions.
\begin{enumerate}
\item For some $x_0 \in \mathbb{R}^{n}$, the level set $\mathcal{{L}}=\{ x \in \mathbb{R}^{n} | f(x) \leq f(x_{0}) \}$ is convex and there exist positive constants $m$ and $M$ such that $m \| x \|^{2} \leq x^{T} G_{k} x \leq M \| x \|^{2} ~~\forall x\in \mathbb{R}^n. $
\item $( p_{k}-  p_{\mathbf{\bar{k}}})^{T} \nabla f(x_{\mathbf{\bar{k}}})<0 $, where $G_{k}=\nabla^2 f(x_{k})$.
\item Step length is found by Wolfe's inexact line search method.
\end{enumerate}
\end{theorem}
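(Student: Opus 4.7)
The plan is to follow the classical Byrd--Nocedal route for proving global convergence of BFGS-type methods, adapted to accommodate the convex combination $B_{k+1}=\lambda B_{k}+(1-\lambda)B_{\bar{k}}$. First I would verify that every matrix in the scheme stays positive definite. Combining (\ref{yGbar}) with Assumption~1 gives $s_{k}^{T}y_{k}=s_{k}^{T}\bar{G}_{k}s_{k}\ge m\|s_{k}\|^{2}>0$, so the BFGS curvature condition holds automatically and $B_{\bar{k}}$ is well defined and positive definite whenever $B_{k}$ is. Since $\lambda\in(0,1)$, the convex combination $B_{k+1}$ inherits positive definiteness, and a straightforward induction from $B_{0}\succ 0$ shows that $B_{k}$, $B_{\bar{k}}$, and $B_{k+1}$ are positive definite for all $k$. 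This in turn guarantees that $p_{\bar{k}}=-B_{k}^{-1}\nabla f_{k}$ and $p_{k}=-B_{k+1}^{-1}\nabla f_{k}$ are descent directions for $f$ at $x_{k}$.

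Next I would control the spectrum of $B_{k+1}$. I would apply the standard trace and determinant recursions to the BFGS step $B_{\bar{k}}$: Assumption~1 yields $\|y_{k}\|^{2}/s_{k}^{T}y_{k}\le M^{2}/m$, giving a uniform upper bound on $\operatorname{tr}(B_{\bar{k}})$, while the multiplicative identity $\det(B_{\bar{k}})=\det(B_{k})\cdot(s_{k}^{T}y_{k})/(s_{k}^{T}B_{k}s_{k})$ supplies a uniform lower bound on $\det(B_{\bar{k}})$. Together these produce two-sided eigenvalue bounds for $B_{\bar{k}}$. To transfer them to $B_{k+1}$, I would use linearity of the trace, $\operatorname{tr}(B_{k+1})=\lambda\operatorname{tr}(B_{k})+(1-\lambda)\operatorname{tr}(B_{\bar{k}})$, and Minkowski's determinant inequality $\det(B_{k+1})^{1/n}\ge\lambda\det(B_{k})^{1/n}+(1-\lambda)\det(B_{\bar{k}})^{1/n}$, so that $B_{k+1}$ inherits uniform spectral bounds independent of $k$.

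From there the argument follows a standard template. Uniform eigenvalue bounds on $B_{k+1}$ produce a uniform positive lower bound on $\cos\theta_{k}$, the angle between $p_{k}$ and $-\nabla f_{k}$. Assumption~3 together with Zoutendijk's theorem then gives $\sum_{k}\cos^{2}\theta_{k}\|\nabla f_{k}\|^{2}<\infty$, so $\|\nabla f_{k}\|\to 0$. Assumption~1 guarantees the existence of a unique minimizer $x^{*}$ of $f$ on $\mathcal{L}$ and, via strong convexity, converts $\|\nabla f_{k}\|\to 0$ into $x_{k}\to x^{*}$, establishing global convergence. I expect Assumption~2 to enter at the bookkeeping stage, through the descent-type inequality $(p_{k}-p_{\bar{k}})^{T}\nabla f(x_{\bar{k}})<0$, to keep the two-phase Wolfe search consistent when monotonicity of $f$ is propagated through the intermediate point $x_{\bar{k}}$.

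The main obstacle I anticipate is transporting the multiplicative BFGS determinant estimate through the additive convex combination without losing the uniform lower bound on $\lambda_{\min}(B_{k+1})$. Minkowski's inequality is the natural tool, but extracting a clean, induction-proof constant requires checking that the bounds on $B_{\bar{k}}$ do not degrade as $k$ grows and that $B_{k+1}$ stays bounded away from singularity uniformly in $k$; this is the step where the interplay between the two phases and the mixing parameter $\lambda$ has to be handled most carefully.
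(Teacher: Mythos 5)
There is a genuine gap in the middle of your plan: the claim that the standard trace and determinant recursions give a \emph{uniform} upper bound on $\operatorname{tr}(B_{\bar{k}})$ and a \emph{uniform} lower bound on $\det(B_{\bar{k}})$, hence two-sided eigenvalue bounds that transfer to $B_{k+1}$ via Minkowski's inequality. Under Assumption~1 the trace recursion only gives $\operatorname{tr}(B_{\bar{k}})\le \operatorname{tr}(B_k)+M_k$ with $M_k\le M$, i.e.\ growth that is linear in $k$, because the subtracted term $\|B_k s_k\|^2/(s_k^T B_k s_k)$ cannot be bounded below without already knowing spectral bounds; likewise the determinant identity $\det(B_{\bar k})=\det(B_k)\,(s_k^Ty_k)/(s_k^TB_ks_k)$ only helps if $s_k^TB_ks_k/\|s_k\|^2$ is bounded above, which again presupposes the trace bound --- the argument is circular. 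This is exactly why the classical Byrd--Nocedal/BFGS theory (Theorem 6.5 of Nocedal--Wright) does \emph{not} produce uniform eigenvalue bounds or a uniform lower bound on $\cos\theta_k$; it only shows, via the potential $\psi(B)=\operatorname{trace}(B)-\ln\det(B)$, that $\cos\theta_{j}$ cannot tend to zero, so that a \emph{subsequence} satisfies $\cos\theta_{j_k}\ge\delta>0$, which together with the Zoutendijk-type sum is enough to force $\|\nabla f_k\|\to 0$. As written, your ``uniform positive lower bound on $\cos\theta_k$'' step would fail, so the chain from spectral bounds to Zoutendijk to convergence does not go through.

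The paper closes this differently, and more simply than your Minkowski route: since $B_{\bar k}-B_k$ is a rank-two matrix, the convex combination collapses into a single scaled rank-two update $B_{k+1}=B_k-(1-\lambda)\frac{B_ks_ks_k^TB_k}{s_k^TB_ks_k}+(1-\lambda)\frac{y_ky_k^T}{y_k^Ts_k}$, whose trace is immediate and whose determinant is computed \emph{exactly} with the identity $\det(I+ab^T+uv^T)=(1+b^Ta)(1+v^Tu)-(a^Tv)(b^Tu)$, giving $\det(B_{k+1})>\det(B_k)(1-\lambda)^3 m_k/q_k$. Feeding these into $\psi$ yields $0<\psi(B_{k+1})<\psi(B_0)+c(k+1)+\sum_j\ln\cos^2\theta_j$, and a contradiction argument shows $\cos\theta_j\not\to 0$. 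The Wolfe conditions plus Lipschitz continuity of $\nabla f$ give the decrease $f_{\bar k}\le f_k-\underline{c}\cos^2\theta_k\|\nabla f_k\|^2$ along the \emph{first-phase} direction (note the relevant angle is between $s_k$ and $B_ks_k$, not between $p_k$ and $-\nabla f_k$ as in your sketch), and Assumption~2 is what propagates this decrease to the actual iterate via $f_{k+1}\le f_{\bar k}$ --- a more specific role than the ``bookkeeping'' you assign it. Summing, using boundedness below of $f$, and invoking strong convexity then gives $\|\nabla f_k\|\to 0$ and $x_k\to x^*$. If you want to salvage your outline, replace the uniform-spectral-bound step by this $\psi$-function subsequence argument applied to the collapsed rank-two form of $B_{k+1}$; Minkowski's inequality is not needed.
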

\begin{proof}
Denote
$m_{k} = \frac{y_{k}^{T} s_{k}}{s_{k}^{T} s_{k}}, ~~M_{k} = \frac{y_{k}^{T}y_{k}}{y_{k}^{T}s_{k}}.$
From Assumption 1,
\begin{equation} \label{mmk}
m_{k}=\frac{y_{k}^{T} s_{k}}{s_{k}^{T} s_{k}}=\frac{s_{k}^{T} \bar{G}_{k} s_{k}}{s_{k}^{T}s_{k}} \geq m
~~\mbox{and}~~
M_{k}=\frac{y_{k}^{T}y_{k}}{y_{k}^{T}s_{k}}=\frac{s_{k}^{T} \bar{G}_{k}^{2} s_{k}}{s_{k}^{T}\bar{G}_{k}s_{k}}=\frac{z_{k}^{T}\bar{G}_{k}z_{k}}{z_{k}^{T}z_{k}} \leq M.
\end{equation}
We have
\begin{align}\label{compact_form}
B_{k+1}=\lambda B_{k} + (1- \lambda) B_{\mathbf{\bar{k}}} =
B_{k} - (1- \lambda) \frac{B_{k} s_{k} s^{T}_{k} B_{k}}{s_{k}^{T}B_{k} s_{k}}+ (1-\lambda) \frac{y_{k} y_{k}^{T}}{y_{k}^{T} s_{k}}.
\end{align}
Then
$
\mbox{trace}(B_{k+1})=\mbox{trace}(B_{k})-(1-\lambda) \frac{\| B_{k} s_{k} \|^{2}}{s_{k}^{T}B_{k}s_{k}}+ (1-\lambda) \frac{\| y_{k} \|^{2} }{y_{k}^{T}s_{k}}.
$
Let $\theta_{k}$ be the angle between $s_{k}$ and $B_{k}s_{k}$. Then $\cos \theta_{k} = \frac{s_{k}^{T} B_{k} s_{k}}{ \| B_{k} s_{k} \| \|s_{k} \| }$.
So
\begin{equation*} 
(1-\lambda) \frac{\| B_{k} s_{k}\|^{2}}{s_{k}^{T}B_{k}s_{k}} = \frac{\| B_{k} s_{k} \|^{2} \|s_{k}\|^{2}}{(s_{k}^{T}B_{k}s_{k})^{2}}. (1-\lambda)\frac{s_{k}^{T}B_{k}s_{k}}{\|s_{k}\|^{2}}=\frac{q_{k}}{\cos^{2} \theta_{k}},
\end{equation*}
where $
q_{k}=(1-\lambda) \frac{s_{k}^{T}B_{k}s_{k}}{s_{k}^{T}s_{k}}.$
Hence \begin{equation} \label{tracev2}
\mbox{trace}(B_{k+1})=\mbox{trace}(B_{k}) + (1-\lambda) M_{k}-\frac{q_{k}}{\cos^{2}\theta_{k}}
\end{equation}
For any $a, b, u, v \in \mathbb{R}^n,$ it is true that $\mbox{det}(I + ab^{T}+uv^{T})=(1+b^{T}a)(1+v^{T}u)-(a^{T}v)(b^{T}u).$ \\\\
Hence taking $a=s_k, b=-(1-\lambda) \frac{ B_ks_k}{s_k^T B_k s_k}, u= B_k^{-1}y_k, v=(1-\lambda) \frac{y_k}{y_k^T s_k}$, we get from (\ref{compact_form}),
\begin{align} \label{detv}
\mbox{det}(B_{k+1})&=\mbox{det}(B_{k}) \Big[ \Big(1- (1-\lambda) \frac{(B_{k}s_{k})^{T}s_{k}}{s_{k}^{T}B_{k}s_{k}}\Big)\Big(1+ (1-\lambda) \frac{y_{k}^{T}B_{k}^{-1}y_{k}}{y_{k}^{T}s_{k}}\Big) + (1-\lambda)^2 \frac{s_{k}^{T}y_{k}}{s_{k}^{T}B_{k}s_{k}}\frac{s_{k}^{T}y_{k}}{y_{k}^{T}s_{k}}\Big] \nonumber \\
&=\mbox{det}(B_{k})\Big[\lambda + \lambda (1-\lambda) \frac{y_{k}^{T}B_{k}^{-1}y_{k}}{y_{k}^{T}s_{k}} + (1-\lambda)^2 \frac{s_{k}^{T}y_{k}}{s_{k}^{T}B_{k}s_{k}}\frac{s_{k}^{T}y_{k}}{y_{k}^{T}s_{k}}\Big] \nonumber \\
&>\mbox{det}(B_{k})\Big[ (1 - \lambda)^2 \frac{s_{k}^{T}y_{k}}{s_{k}^{T}B_{k}s_{k}}\Big]=\mbox{det}(B_{k}) (1-\lambda)^3 \frac{m_{k}}{q_{k}},
\end{align}
Define a function $\psi : \mathbb{R}^{n\times n} \rightarrow \mathbb{R}$ as $\psi(B)= \mbox{trace}(B)-\ln(\mbox{det}(B))$ where $B$ is a positive definite matrix. Since the eigenvalues $\lambda_{i}$'s of $B$ are positive so $$\psi(B)=\mbox{trace}(B)-\ln(\mbox{det}(B))=\sum_{i=1}^{n} \Big( \lambda_{i}-\ln (\lambda_{i}) \Big) >0.$$
From (\ref{tracev2}) and (\ref{detv}),
\begin{align}\label{psi1}
\psi(B_{k+1})&=\mbox{trace}(B_{k+1})-\ln \Big(\mbox{det}(B_{k+1}) \Big) \nonumber \\
&= \mbox{trace}(B_{k}) + (1- \lambda) M_{k}-\frac{q_{k}}{\cos^{2}\theta_{k}} -\ln\Big(\det(B_{k+1})\Big) \nonumber \\
& < \mbox{trace}(B_{k}) + (1- \lambda) M_{k}-\frac{q_{k}}{\cos^{2} \theta_{k}}-\ln\Big(\mbox{det}(B_{k})\Big)-\ln(m_{k})+\ln(q_{k})-\ln~(1-\lambda)^3 \nonumber \\
&=\psi(B_{k}) + \Big( (1- \lambda) M_{k} - \ln (m_{k})- \ln (1-\lambda)^3 \Big) + \Big[ 1- \frac{q_{k}}{\cos^{2} \theta_{k}} + \ln \Big( \frac{q_{k}}{\cos^{2}\theta_{k}} \Big)\Big] \nonumber \\
&~~~~~~~~~~~~~~~~~~+ \ln (\cos^{2} \theta_{k}).
\end{align}
Since the real valued function $h(t)=1-t-\ln(t)$ is always non positive for all $t>0$,  so $\Big[ 1- \frac{q_{k}}{\cos^{2} \theta_{k}} + \ln \Big( \frac{q_{k}}{\cos^{2}\theta_{k}} \Big)\Big]$ is non positive. From (\ref{mmk})  \\
\begin{equation}\label{psi}
0< \psi(B_{k+1}) < \psi(B_{0})+c(k+1)+ \sum_{j=0}^{k} \ln \Big( \cos^{2} \theta_{j} \Big),
\end{equation}
where $c= (1- \lambda) M_{k} - \ln (m_{k})- \ln (1-\lambda)^3 $ can be assumed to be positive with out loss of generality. \\ \\
Assumption 1 implies that $ \nabla f$ Lipschitz continuous. Hence there exists a constant $L_{1} > 0$ such that  $ \| \nabla f(x) - \nabla f(y) \| \leq L_{1} \| x- y \|$ for all $x, y\in int(\mathcal{L})$.
\\\\
 Here we employ Wolfe inexact line search to determine the step length $\alpha_{\mathbf{\bar{k}}}$ at the iterating point $x_k$ for which
\begin{align*}
&f(x_{k}+\alpha_{\mathbf{\bar{k}}}s_{k}) \leq f(x_{k}) + c_{1} \alpha_{\mathbf{\bar{k}}} \nabla f_{k}^{T} s_{k},
\nonumber\\
&\nabla f(x_{k} + \alpha_{\mathbf{\bar{k}}} s_{k})^{T} s_{k} \geq c_{2} \nabla f_{k}^{T} s_{k},
\end{align*}
with $ 0 < c_{1} < c_{2} <1 $. From second inequality
\begin{equation} \label{CCC1}
(\nabla f_{\mathbf{\bar{k}}} - \nabla f_{k})^{T} s_{k} \geq (c_{2}-1) \nabla f_{k}^{T} s_{k},
\end{equation}
while the Lipschitz continuity of $\nabla f$ implies that
\begin{align} \label{CCC2}
(\nabla f_{\mathbf{\bar{k}}} - \nabla f_{k})^{T} s_{k} & \leq \| \nabla f_{\mathbf{\bar{k}}} - \nabla f_{k} \| \|s_{k} \|  \nonumber \\
& \leq L_{1} \|\alpha_{\mathbf{\bar{k}}} s_{k} \| \| s_{k} \|
 = \alpha_{\mathbf{\bar{k}}} L_{1} \| s_{k} \|^{2}.
\end{align}
Combining (\ref{CCC1}) and (\ref{CCC2}), we obtain
$
\alpha_{\mathbf{\bar{k}}} \geq \frac{c_{2} -1}{L_{1}} \frac{\nabla f_{k}^{T} s_{k}}{ \| s_{k} \|^{2}}.
$ Substituting this value of $\alpha_{\mathbf{\bar{k}}}$ in the first inequality of Wolfe's condition we can get
\begin{equation*} 
f_{\mathbf{\bar{k}}} \leq f_{k} - c_{1} \frac{1- c_{2}}{L_{1}} \frac{ (\nabla f_{k}^{T} s_{k})^{2}}{ \| s_{k} \|^{2} } \leq f_{k} - \underline{c} \cos^{2} \theta_{k} \| \nabla f_{k} \|^{2},
\end{equation*}
where $\underline{c}= c_{1} \frac{1-c_{2}}{L_{1}}.$
For each $k$, Assumption 2 implies $f_{k+1} \leq f_{\mathbf{\bar{k}}}$, which gives
$f_{k+1}\leq f_{k} - \underline{c} \cos^{2} \theta_{k} \| \nabla f_{k} \|^{2}$. \\
 Repeated application of this inequality provides $f_{k+1}\leq f_{0} -\underline{c}\sum_{j=0}^{k} \cos^{2} \theta_{j} \| \nabla f_{j} \|^{2}.$
That is, $$\sum_{j=0}^{k} \cos^{2} \theta_{j} \| \nabla f_{j} \|^{2} \leq \frac{1}{\underline{c}}(f_{0}-f_{k+1}).$$
Since $f$ is bounded below, $f_{0}-f_{k+1}$ is less than some positive constant, for all $k$. Taking the limit $k\rightarrow \infty$ we can get
$
\sum_{k=0}^{\infty} \cos^{2} \theta_{k} \| \nabla f \|^{2} < \infty.
$\\\\
Hence  $\cos^{2} \theta_{k} \| \nabla f \|^{2} \rightarrow 0$. Then  $\| \nabla f \| \rightarrow 0$ if  $\cos \theta_{k} > \frac{1}{M'}$, for some positive number $M'$. Then $\cos \theta_{j} \rightarrow 0$ is not true. Otherwise if
 $\cos \theta_{j} \rightarrow 0$ then there exists a natural number $k_{2} >0 $ such that for all $j>k_{2}$ we have $ \ln (\cos^{2} \theta_{j}) < -2c,$
where $c$ is the constant defined in (\ref{psi}).
Then from (\ref{psi}) we get
\begin{equation*}
0< \psi(B_{0}) + c(k+1)+\sum_{j=0}^{k_{1}} \ln ( \cos^{2} \theta_{j})+ \sum_{j=k_{1}+1}^{k} (-2c).
\end{equation*}
R.H.S. of this inequality is negative for large $k$, which is a contradiction.
Therefore there exists a subsequence of indices $\{ j_{k} \}_{k=1,2, \ldots}$ such that $\cos \theta_{jk} \geq \delta >0.$ Hence
$\lim \| \nabla f_{k} \| \rightarrow 0$.
Since the problem is strongly convex, it proves that $x_{k} \rightarrow x^{*}$.
\end{proof}
\begin{theorem} \label{Theorem_super}
Suppose the hessian $G(x)$ is Lipschitz continuous. Then the sequence $\{ x_k \}$, generated by  (\ref{sch}) converges to minimizer of $(P)$
 at super linear order.
\end{theorem}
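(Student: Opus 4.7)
The plan is to reduce the problem to verifying the Dennis--Moré type limit (\ref{condition}) of Theorem \ref{convergence_criterion}, and then apply that theorem. Since the previous theorem has already established $x_k \to x^*$ under Wolfe line search, and since the unit step eventually satisfies Wolfe's conditions in the vicinity of $x^*$ when the Hessian is Lipschitz, we may freely assume $\alpha_k = \alpha_{\bar{k}} = 1$ for $k \ge k_0$. In that regime $s_k = p_{\bar{k}}$, so (\ref{condition}) is equivalent to the classical Dennis--Moré relation
\begin{equation*}
\lim_{k\to\infty} \frac{\|(B_k - G^*)s_k\|}{\|s_k\|} = 0, \qquad G^* := \nabla^2 f(x^*).
\end{equation*}

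The core of the argument is a potential-function analysis modelled on the Byrd--Nocedal treatment of BFGS, adapted to the convex-combination update
$B_{k+1} = \lambda B_k + (1-\lambda)B_{\bar{k}}$ of (\ref{compact_form}). First, Lipschitz continuity of $G$ gives $\|\bar G_k - G^*\| \le L\,(\|x_k - x^*\| + \|x_{\bar{k}} - x^*\|)$, which together with (\ref{yGbar}) yields $\|y_k - G^* s_k\| = \mbox{o}(\|s_k\|)$, and hence, after premultiplying by $G^{*-1/2}$ on both sides, control over the rescaled secant residual. Introduce the rescaled matrices $\tilde B_k = G^{*-1/2} B_k G^{*-1/2}$ and $\tilde y_k = G^{*-1/2} y_k$, $\tilde s_k = G^{*1/2} s_k$. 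The potential $\psi(\tilde B_k) = \mbox{trace}(\tilde B_k) - \ln\det(\tilde B_k)$ is nonnegative by the computation preceding (\ref{psi1}). The estimates (\ref{tracev2}) and (\ref{detv}), rewritten for $\tilde B_{k+1}$, give
\begin{equation*}
\psi(\tilde B_{k+1}) \le \psi(\tilde B_k) + (1-\lambda)\Big(\tilde M_k - \ln \tilde m_k\Big) - 3\ln(1-\lambda) + \Big[1 - \tfrac{\tilde q_k}{\cos^2\tilde\theta_k} + \ln\tfrac{\tilde q_k}{\cos^2\tilde\theta_k}\Big] + \ln(\cos^2\tilde\theta_k),
\end{equation*}
where $\tilde m_k, \tilde M_k, \tilde q_k, \tilde\theta_k$ are the obvious rescaled analogues. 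The Lipschitz bound forces $\tilde m_k \to 1$ and $\tilde M_k \to 1$ (indeed, $|\tilde m_k - 1|$ and $|\tilde M_k - 1|$ are summable), so the $(1-\lambda)(\tilde M_k - \ln \tilde m_k)$ terms are summable. Since $\psi(\tilde B_{k+1}) \ge 0$, telescoping gives $\sum_k \bigl(\text{non-positive bracket}\bigr) + \sum_k \ln\cos^2\tilde\theta_k > -\infty$, which forces $\tilde\theta_k \to 0$ and $\tilde q_k \to 1$, i.e.\ $\tilde B_k \tilde s_k / \|\tilde s_k\| \to \tilde s_k / \|\tilde s_k\|$ in direction and magnitude.

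Finally, chasing through the rescaling, these two facts together are exactly the Dennis--Moré limit $\|(B_k - G^*)s_k\|/\|s_k\| \to 0$. Applying Theorem \ref{convergence_criterion} then yields super linear convergence of $\{x_k\}$ to $x^*$. The step that I expect to be the main obstacle is handling the averaging parameter $\lambda$ in the potential inequality: the factors $(1-\lambda)$ and $-3\ln(1-\lambda)$ coming from (\ref{tracev2})--(\ref{detv}) must be carefully tracked so that the bracketed non-positive term still dominates and so that the standard telescoping argument still pins $\tilde\theta_k$ and $\tilde q_k$ to their ``good'' values; this is precisely the place where the proposed scheme departs from textbook BFGS and where a routine cite-and-apply of Byrd--Nocedal will not suffice.
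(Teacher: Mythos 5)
Your overall route is the same as the paper's: assume unit steps near $x^{*}$ so that $s_{k}=p_{\mathbf{\bar{k}}}$, rescale by $G_{*}^{\pm\frac{1}{2}}$, run the $\psi=\mbox{trace}-\ln\det$ potential argument on the averaged update (\ref{compact_form}), use Lipschitz continuity of the Hessian together with (\ref{yGbar}) to bound the rescaled secant residual by a multiple of $\epsilon_{k}$ (this is exactly (\ref{Superlinear_theorem})), deduce the Dennis--Mor\'e limit (\ref{condition}), and invoke Theorem \ref{convergence_criterion}. The only structural difference is at the last step: the paper disposes of the telescoping by citing Theorem 6.6 of \cite{nocedal2006numerical} ``with $s_{k}$ replaced by $p_{\mathbf{\bar{k}}}$'', whereas you attempt to carry the telescoping out explicitly for the modified update.

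Precisely there your sketch has a genuine gap, which you flag yourself but do not close. With the averaged update the additive term in the potential inequality is $(1-\lambda)\tilde{M}_{k}-\ln\tilde{m}_{k}-3\ln(1-\lambda)$; even granting $\tilde{M}_{k}\rightarrow 1$ and $\tilde{m}_{k}\rightarrow 1$, this tends to the strictly positive constant $(1-\lambda)-3\ln(1-\lambda)$ rather than to $0$, so it is not summable, and the telescoped inequality only bounds $\sum_{j}\big[\ln\cos^{2}\tilde{\theta}_{j}+1-\tfrac{\tilde{q}_{j}}{\cos^{2}\tilde{\theta}_{j}}+\ln\big(\tfrac{\tilde{q}_{j}}{\cos^{2}\tilde{\theta}_{j}}\big)\big]$ below by a quantity that decreases linearly in $k$; that does not force $\tilde{\theta}_{k}\rightarrow 0$ or $\tilde{q}_{k}\rightarrow 1$. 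Moreover $\tilde{q}_{k}$ carries the factor $(1-\lambda)$ (it is $(1-\lambda)\,\tilde{s}_{k}^{T}\tilde{B}_{k}\tilde{s}_{k}/\|\tilde{s}_{k}\|^{2}$), so even if the bracket did vanish you would be pinning the Rayleigh quotient to $1/(1-\lambda)$, not $1$, which is not the Dennis--Mor\'e limit. Finally, your claimed summability of $|\tilde{M}_{k}-1|$ and $|\tilde{m}_{k}-1|$ needs $\sum_{k}\epsilon_{k}<\infty$, i.e.\ an R-linear rate that must be established first (as in the Byrd--Nocedal development) and is proved neither in your sketch nor in the paper. To be fair, the paper's own proof does not resolve these points either: Theorem 6.6 of \cite{nocedal2006numerical} is stated for the unmodified BFGS update, so the cite-and-apply step glosses over exactly the $\lambda$-dependence you identified; but as a self-contained argument your proposal stops short at its central step.
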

\begin{proof}
Suppose the sequence $\{x_k\}$, generated by (\ref{sch}) converges to $x^{*}$, which is a solution of $P$.  Denote
 \begin{equation*}
G_{*}=G(x^{*}),~~\tilde{s_{k}}=G_{*}^{\frac{1}{2}}s_{k},~~~ \tilde{y}_{k}=G_{*}^{\frac{1}{2}}y_{k},~~~\tilde{B_{k}}=G_{*}^{-\frac{1}{2}}B_{k} G_{*}^{-\frac{1}{2}}, \nonumber
\end{equation*}
If $\tilde{\theta}_{k}$ is the angle between $\tilde{s}_{k}$ and $\tilde{B}_{k}\tilde{s}_{k}$ then $\cos \tilde{\theta}_{k} = \frac{\tilde{s}_{k}\tilde{B}_{k}\tilde{s}_{k}} {\| \tilde{s}_{k} \| \| \tilde{B}_{k} s_{k}\|}$. Denote $$
 \tilde{q}_{k}=\frac{\tilde{s}_{k}\tilde{B_{k}}\tilde{s}_{k}}{2\| \tilde{s}_{k} \|^{2}},~~ \tilde{M_{k}} = \frac{\| \tilde{y_{k}} \|^{2}}{\tilde{y_{k}}^{T} \tilde{s_{k}}}~~
\mbox{and}~~ \tilde{m}_{k}=\frac{\tilde{y}_{k}^{T} \tilde{s}_{k}}{\| \tilde{s}_{k} \|^{2}} $$
By pre and post multiplying two-phase quasi-Newton update (\ref{compact_form}) by $G_{*}^{-\frac{1}{2}}$ and grouping we get
$
\tilde{B}_{k+1}=\tilde{B}_{k} - (1-\lambda) \frac{\tilde{B}_{k} \tilde{s}_{k} \tilde{s}^{T}_{k} \tilde{B}_{k}}{\tilde{s}_{k}^{T}\tilde{B}_{k} \tilde{s}_{k}}+ (1-\lambda) \frac{\tilde{y}_{k} \tilde{y}_{k}^{T}}{\tilde{y}_{k}^{T} \tilde{s}_{k}}.
$
Following the similar logic of (\ref{psi1}) from Subsection (\ref{Convergence}), we get
\begin{equation*}
\psi(\tilde{B}_{k+1})< \psi(\tilde{B}_{k})+ \Big((1-\lambda) \tilde{M}_k- \ln (\tilde{m}_{k})- \ln~ (1-\lambda)^3 \Big)+\Big[1- \frac{\tilde{q} _{k}}{\cos^{2} \tilde{\theta}_{k}}+ \ln \Big( \frac{\tilde{q} _{k}}{\cos^{2} \tilde{\theta}_{k}} \Big) \Big].
\end{equation*}
From (\ref{yGbar}), $y_{k}= \bar{G}_{k}s_{k}$, so $y_{k}-G_{*}s_{k}=(\bar{G}_{k}-G_{*})s_{k}$ and
$
\tilde{y}_{k}-\tilde{s}_{k}=G_{*}^{-\frac{1}{2}}(\bar{G}_{k}-G_{*})G_{*}^{-\frac{1}{2}}\tilde{s}_{k}.
$
Using the Lipschitz continuity property of hessian matrix, we have
\begin{equation*}
\| \tilde{y}_{k}-\tilde{s}_{k} \| \leq \| G_{*}^{-\frac{1}{2}} \| \| \tilde{s}_{k} \| \| G_{k}- G_{*} \| \leq \| G_{*}^{-\frac{1}{2}} \| \| \tilde{s}_{k} \| L \epsilon_{k},
\end{equation*}
where $\epsilon_{k}=\epsilon_{k}=\mbox{Max} \{ \| x_{\bar{k}} - x^{*} \|, \| x_{k} - x^{*} \|  \}$ and $L$ is Lipschitz constant.
From above inequality,
\begin{equation}\label{Superlinear_theorem}
\frac{\| \tilde{y}_{k} -\tilde{s}_{k} \|}{\| \tilde{s}_{k} \|} \leq \bar{c} \epsilon_{k}, ~~ \mbox{where~$\bar{c}=\| G_{*}^{-\frac{1}{2}} \| L $~ is a real constant}.
\end{equation}
Replacing $s_{k}$ by $p_{\mathbf{\bar{k}}}$ in Theorem 6.6 (Chapter 6 \cite{nocedal2006numerical}) and using (\ref{Superlinear_theorem}) we conclude that
$\lim \frac{\| (B_{k} -G_{*})p_{\mathbf{\bar{k}}} \|}{\| p_{\mathbf{\bar{k}}} \|}=0.$
This is the condition for super liner convergence of the sequence in Theorem \ref{convergence_criterion}, with unit step length $\alpha_{k}=1$ in the vicinity of the solution. Hence the result.
\end{proof}
\subsection{Algorithm} \label{Algo}
In the previous section a two-phase quasi-Newton sequence (\ref{sch}) is developed, $B_{k}$ is updated  at each iteration using intermediate update $B_{\mathbf{\bar{k}}}$. Since solving matrix system has expensive computational cost, so we frame the algorithm for the proposed scheme such that it includes matrix-vector multiplication only. Denote the inverse $B_k$ at $x_{k}$ as $H_{k}$.
As per the proposed scheme $B_{k+1}=\lambda B_{k}+(1- \lambda) B_{\mathbf{\bar{k}}},$
which implies,
\begin{equation}\label{H}
H_{k+1}=B_{k+1}^{-1}= \Bigg( \lambda B_{k}+(1- \lambda) B_{\mathbf{\bar{k}}} \Bigg)^{-1}=\Bigg( \lambda H_{k}^{-1} + (1-\lambda) H_{\mathbf{\bar{k}}}^{-1} \Bigg)^{-1}.
\end{equation}
To frame the algorithm we use the standard inverse Hessian update as follows.
\begin{equation} \label{BFGS_H}
H_{\mathbf{\bar{k}}}= (I- \rho_{k} s_{k} y_{k}^{T}) H_{k} (I - \rho_{k} y_{k} s_{k}^{T})+ \rho_{k} s_{k} s_{k}^{T},~~\mbox{where} ~\rho_{k}= \frac{1}{y_{k}^{T}s_{k}}.
\end{equation}
\begin{algorithm}[H]
 \KwIn{$x_{0}$, $\epsilon>0$}
 $H_{0} \leftarrow I$ ; \\
 $k \leftarrow 0$ ; \\
\hspace{5mm} while ($\| \nabla f_{k} \| > \epsilon $) \\
\hspace{10mm}  $p_{k} \leftarrow -H_{k} \nabla f_{k}$ ; \\
 \hspace{10mm} $x_{\bar{k}} \leftarrow x_{k} + \alpha_{\mathbf{\bar{k}}} p_{\mathbf{\bar{k}}}$ ;  $\alpha_{\mathbf{\bar{k}}}$ is the step-length by Wolfe condition.\\
 \hspace{10mm} Compute $H_{\bar{k}}$ by (\ref{BFGS_H}) ; \\ 
 \hspace{10mm} Compute $H_{k+1}$ by (\ref{H}) ;  \\ 
 \hspace{10mm} $p_{k} \leftarrow -H_{k+1} \nabla f_{k}$ ; \\
 \hspace{10mm} $x_{k+1} \leftarrow x_{k} + \alpha_{k} p_{k}$ ; $\alpha_{k}$ is the step-length by Wolfe condition.\\
 \hspace{10mm} $k \leftarrow k+1$ ; \\
\hspace{5mm} end \\
 \KwOut{$x_{k+1}$}
\caption{{\bf Two Phase quasi-Newton Algorithm} \label{Algorithm1}}
\end{algorithm}

\section{Numerical results and discussions}\label{numerical}
We fix the set up for numerical computation on MATLAB R2015a platform with 4 GB RAM in a 64 bit computer. We fix $\lambda = 0.5$ for proposed two-phase quasi-Newton scheme. The tolerance limit is taken to be $10^{-6}$. The Armijo parameter, Wolfe parameter, backtracking factors are taken to be $10^{-4}$, $0.9$ and $0.5$ respectively. We choose 30 test functions of 10 dimensions from \cite{andrei2008unconstrained} and run the traditional BFGS and the proposed algorithm (Two-phase QN) in this platform with the initial points specified in \cite{andrei2008unconstrained}. For each function, both the algorithms are executed for 5 runs. Note the number of iterations and average execution time in Table 1. It presents that two-phase quasi-Newton scheme takes less number of iterations to reach the solution point whereas execution time remains almost same.\\\\
Furthermore, we provide a graphical illustration (Figure 1) of performance profile for number of iteration and average execution time for both of the algorithms on this test set. The performance ratio defined by Dolan and More \cite{dolan2002benchmarking} is $\rho_{(p,s)}=\frac{r_{(p,s)}}{ \min \{r_{(p,s)}: 1 \leq r \leq n_s\}}$, where $r_{(p,s)}$ refers to the iteration (or, execution time) for solver $s$ spent on problem $p$ and $n_s$ refers to the number of problems in the model test set. In order to obtain an overall assessment of a solver on the given model test set, the cumulative distribution function $P_s(\tau)$ is $P_s(\tau) = \frac{1}{n_p} size\{ p \in \mathscr{T}: \rho_{(p,s)} \leq \tau\},$ where $P_s (\tau)$ is the probability that a performance ratio $\rho_{(p,s)}$ is within a factor of $\tau$ of the best possible ratio.

\vspace{-5mm}
\begin{center}
\begin{table}[H]
\caption{Comparison between BFGS and Two-phase QN}
\vspace{2mm}
\begin{tabular}{ | r | l | r | r | r | r | }
\hline
	  & & \multicolumn{2}{c|} {BFGS} & \multicolumn{2}{c|}  {Two-Phase QN} \\ \hline
	 Sl  & Function   & Iteration & Time(s) & Iteration & Time(s) \\ \hline
	1 & Almost Perturbed Quadratic & 18 & 20.084107& 15 & 15.409993 \\
	2 & ARWHEAD  & 7 & 10.222042 & 7 & 6.131260 \\
	3 & BIGGSB1 & 12 & 4.871641 & 11 & 6.353324\\
	4 & Diagonal 1 & 17 & 25.438489& 13 & 23.955075\\
	5 & Diagonal 2 & 23 & 11.601469 & 22 & 15.697982 \\
	6 & Diagonal 3 & 19 & 14.133412 & 14 & 26.880991\\
	7 & Diagonal 7 & 5 & 7.324530& 7 & 7.011224 \\
	8 & Diagonal 9 & 22 & 16.894874 & 14 & 23.560161 \\
	9 & DIXMANNA DIXMAANL & 12 & 18.307508 & 11 & 18.078639 \\
	10 & DQDRTIC & 13 & 27.743780 & 20 & 26.610668 \\
	11 & EDENSCH & 23 & 29.752709& 18 & 29.497861 \\
	12 & ENGVAL1  & 30 & 29.388006& 25 & 51.026493 \\
	13 & Extended Beale & 22 & 17.23462331 & 20 & 34.315333 \\
	14 & Extended DENSCHNB & 7 & 8.237664028& 7 & 16.228143\\
	15 & Extended Freudenstein and Roth & 10 & 8.573212 & 9 & 7.694981 \\
	16 & Extended PSC1 & 13 & 11.365949 & 12 & 10.933492 \\
	17 & Extended Tridiagonal 1 & 21 & 15.855021 & 20 & 12.977472 \\
	18 & Extended Tridiagonal 2 & 11 & 6.7415813 & 9 & 9.071257\\
	19 & Fletcher & 28 & 35.639960 & 25 & 47.106851 \\
	20 & Generalized PSC1 & 23 & 27.241052 & 14 & 27.129654\\
	21 & Hager & 17 & 17.911313 & 8 & 15.208549 \\
	22 & HIMMELH & 7 & 10.81266821 & 6 & 5.727413 \\
	23 & Partial Perturbed Quadratic & 16 & 18.650512 & 16 & 16.474436 \\
	24 & Perturbed Quadratic Diagonal & 10 & 11.815186 & 5 & 7.448544 \\
	25 & Perturbed Tridiagonal Quadratic & 18 & 18.504252 & 14 & 22.422385 \\
	26 & Quadratic QF1 & 16 & 27.166675 & 11 & 15.076536 \\
	27 & Quadratic QF2  & 23 & 36.647364 & 17 & 28.013425 \\
	28 & Raydan1  & 18 & 14.126658 & 16 & 19.135894 \\
	29 & Raydan2 & 7 & 8.412570 & 5 & 8.376115\\
	30 & Tridia & 15 & 16.038225 & 16 & 18.414270 \\
	   \hline
\end{tabular}
\end{table}
\end{center}


\begin{figure}[H]
\begin{minipage}{.5\textwidth}
  \includegraphics[width=1.2\linewidth]{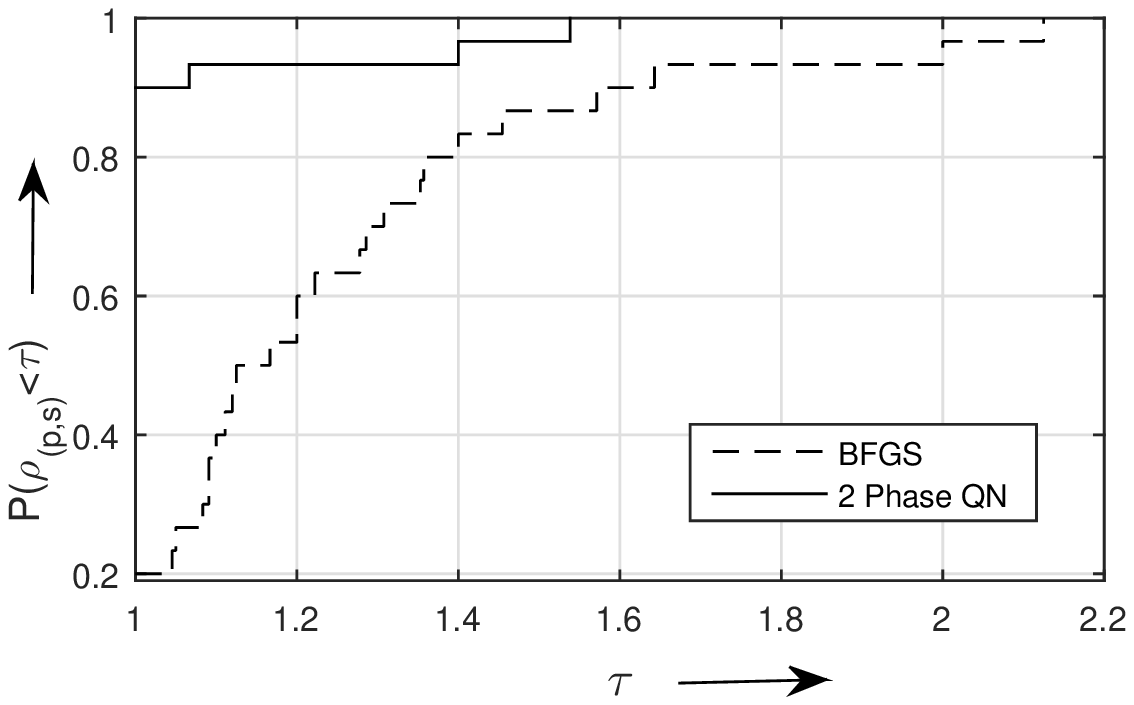}
  \captionof{figure}{Performance profile for number of iterations}
  \label{fig:test1}
\end{minipage}%
\begin{minipage}{.5\textwidth}
  \includegraphics[width=1.2\linewidth]{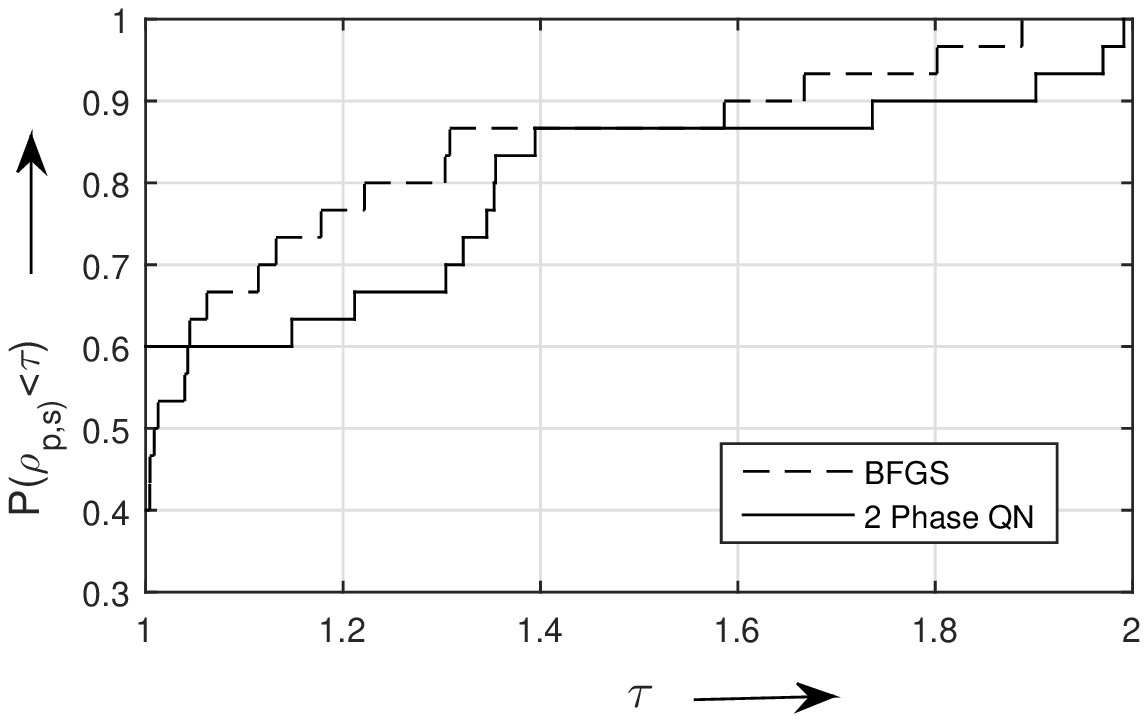}
  \captionof{figure}{Performance profile for average Time}
  \label{fig:test2}
\end{minipage}
\end{figure}

\section{Conclusion}
In this paper we have proposed a two-phase quasi-Newton scheme for optimization problem, which shows super linear convergence property under some suitable assumptions. The algorithm is executed for some test functions and the results show the advantage of the proposed scheme in terms of iterations in comparable time over the traditional schemes, which is provided in the performance profiles.

\end{document}